\documentclass[letterpaper, 10 pt, conference]{ieeeconf}  

\IEEEoverridecommandlockouts                              
\usepackage{graphics} 
\usepackage{epsfig} 

\usepackage{mathptmx} 
\DeclareMathAlphabet{\mathcal}{OMS}{cmsy}{m}{n}
\SetMathAlphabet{\mathcal}{bold}{OMS}{cmsy}{b}{n}

\usepackage{times} 

\usepackage{amsmath} 
\usepackage{amssymb}  
\usepackage{algorithm}
\usepackage{algpseudocode}
\usepackage{mathtools}
\usepackage{xcolor}

\usepackage{ntheorem}
\newtheorem{assumption}{Assumption}
\newtheorem{remark}{Remark}
\newtheorem{problem}{Problem}

\newtheorem{lemma}{Lemma}
\newtheorem{corollary}{Corollary}[lemma]
\newtheorem{proposition}{Proposition}

\usepackage{url}

\usepackage{cite}
\usepackage{stfloats}

\allowdisplaybreaks[2]

\newcommand{\comment}[1]{}
\newcommand{\st}{\mathrm{s.t.}}

\title{\LARGE \bf 
Cubature-Based Statistical Moment Steering for Nonlinear, Non-Gaussian Trajectory Optimization
}

\author{Daniel C. Qi and Kenshiro Oguri
\thanks{This work was supported by the U.S. Air Force Office of Scientific Research through grant FA9550-23-1-0512.}
\thanks{D. C. Qi is a PhD student with the School of Aeronautics and Astronautics, Purdue University, West Lafayette, Indiana, 47907, USA. {\tt\small qi85@purdue.edu}}
\thanks{K. Oguri is an Assistant Professor with the School of Aeronautics and Astronautics, Purdue University, West Lafayette, Indiana, 47907, USA.}
}

\begin{document}

\maketitle
\thispagestyle{empty}
\pagestyle{empty}

\begin{abstract}

This paper addresses discrete-time non-Gaussian distribution steering in deterministic nonlinear systems. Distribution steering is a problem in which a control policy is designed over a distribution of trajectories rather than optimizing a single trajectory. The challenge with nonlinear systems is that even initially Gaussian distributions evolve into non-Gaussian distributions, often precluding a closed-form representation of the probability density. A method called statistical moment steering applies a cubature-based method to approximate non-Gaussian distributions for an approximate solution to this problem. This paper provides theoretical support and details how it can be solved with sequential convex optimization. A coupled, nonlinear, second-order ordinary differential equation describing a two-dimensional oscillator is provided as a numerical example.

\end{abstract}


\section{Introduction}
This paper addresses the problem of non-Gaussian distribution steering in deterministic nonlinear systems in discrete time over a finite horizon. Distribution steering is a problem in which a control policy is designed for an entire distribution of trajectories, rather than a single discrete trajectory. One of the simplest cases is the consideration of Gaussian distributions in linear systems. This type of problem is known as covariance control \cite{Hotz-CS, Collins-CS} or covariance steering (CS) more recently \cite{Okamoto-CS}. Since a multivariate Gaussian distribution is fully characterized by its mean vector and covariance matrix and remains Gaussian under affine transformations, the CS problem reduces to designing control policies that steer the evolution of these moments. Both continuous \cite{Chen-CS} and discrete versions \cite{Collins-CS, Bakolas-CS} of this problem exist. For the discrete version, the problem can be solved with convex programming \cite{Liu-CS}, or sequential convex programming for a square root covariance approach \cite{Naoya-Sqrt-CS}. 

CS provides theoretical guarantees only for linear Gaussian systems, but this technique has still been effectively demonstrated in nonlinear problems \cite{Ridderhof-NL-CS, Oguri-Safe-Autonomy, Naoya-CS-JGCD, Fife-CS, Benedikter-JGCD}. These approaches assume that the nonlinearly transformed distribution remains well approximated by a Gaussian. But ultimately, nonlinear systems inevitably result in non-Gaussian distributions, and the CS framework has been observed to break down in highly nonlinear engineering applications \cite{Qi-min-NL-Journal}. This motivates the problem of non-Gaussian distribution steering. 

The Liouville equation \cite{Zwanzig-StatMech, Boussouira-PDE-control} is a partial differential equation (PDE) that governs the evolution of the probability density function in a deterministic dynamical system. Although numerical methods can propagate a non-Gaussian probability density by solving this PDE, embedding it within a control optimization is computationally demanding because each control iterate requires a new solution to the PDE \cite{Boussouira-PDE-control}. To obtain tractable formulations, existing approaches focus on optimizing lower-dimensional statistical summaries of the distribution. For example, in linear systems, linear Gaussian Mixtures \cite{Naoya-NG-CS} and characteristic functions \cite{Sivaramakrishnan-CF-CS} have been used to design corresponding steering policies of non-Gaussians. But extending these methods to nonlinear systems may still be difficult, as they also require the nonlinear evolution of the statistical representation.

The uncertainty-quantification literature provides computationally tractable methods for approximating the evolution of non-Gaussian distributions through nonlinear systems, including nonlinear Gaussian-mixture models \cite{DeMars-GMM} and polynomial chaos expansions \cite{Jones-PCE}. This paper adopts cubature methods from uncertainty quantification to develop a corresponding control policy. Cubature points are discrete points used to approximate a distribution's statistical moments after a nonlinear transformation. A common cubature rule is the unscented transform, which has been applied to the nonlinear distribution steering problem \cite{Ozaki-UT-TrajOp, Ross-CUT-Traj-Op}, along with higher-order cubature rules \cite{Nandi-CUT}. Existing applications of these methods have been limited to mean and covariance control, whereas prior work by the authors demonstrated control of higher-order moments \cite{Qi-Stat-Moment-Steering}. This approach is referred to as statistical moment steering, and the present work establishes its theoretical foundation.

The contributions of this paper are as follows: (i) establish a theoretical connection between the Liouville equation and cubature-based statistical moment steering, (ii) describe statistical moment steering in a general framework using cubature points, and (iii) present a computationally efficient method of solving the steering problem using sequential convex programming.

\section{Preliminaries}\label{sec: prelim}
\subsection{Notation}
$\mathbb{E}[\cdot]$ denotes the expectation operator and $\mathcal{N}(\boldsymbol{\mu},P)$ denotes a normal distribution with mean $\mu$ and covariance $P$. $\mathbb{R}, \mathbb{R}^n, \mathbb{R}^{n\times m}$ denotes real numbers, $n$-dimensional real-valued vector, and $n\times m$ real-valued matrix respectively. $I_n$ denotes an identity matrix of size $n$. $|\cdot|$ denotes the absolute value. The notation $\mathbb{Z}_{a:b}$ denotes the set of integers between and including $a$ and $b$. $\mathrm{tr}(\cdot)$ and $\lambda_{\max}(\cdot)$ denote the trace and maximum eigenvalue of a matrix, respectively. ($\nabla \cdot$) denotes the divergence of a vector field. The square-root of matrix $A$ is $A=(A^{1/2})(A^{1/2})^\top$.

\subsection{Dynamical Systems and Flow Maps}
Let $\boldsymbol{x} \in \mathbb{R}^{n_x}$ denote the state of an ordinary differential equation with vector field $f$ and control $\boldsymbol{u} \in \mathbb{R}^{n_u}$:
\begin{equation}\label{eq: ODE}
\dot{\boldsymbol{x}}=f(\boldsymbol{x},\boldsymbol{u})
\end{equation}
Let the flow map be denoted by $\varphi$, and it is a differentiable mapping such that  
\begin{equation}
    \boldsymbol{x}(t) = \varphi_t(\boldsymbol{x}_0; \Pi_{[t_0,t]})
\end{equation}
where $\boldsymbol{x}_0$ is the state at $t_0$ with control policy from $t_0$ to $t$ denoted by $\Pi_{[t_0,t]}$. The fundamental matrix $\Phi$ for general nonlinear systems can be expressed by the Jacobian of the flow map:
\begin{equation}
\begin{aligned}
    \Phi(t,t_0; \Pi_{[t_0,t]}) &= \frac{\partial \varphi_t(\boldsymbol{x}_0; \Pi_{[t_0,t]})}{\partial \boldsymbol{x}_0} 
\end{aligned}
\end{equation}
For brevity, write $\Pi$ for $\Pi_{[t_0,t]}$ whenever the relevant time interval is clear from context.

\subsection{Statistical Moments}
Given a univariate random variable ${X}\in\mathbb{R}$, the expected value of ${X}$ can be represented as
\begin{equation}
    \mathbb{E}[\mathcal{G}({X})] = \int_{\mathbb{R}} \mathcal{G}(x) p(x) \mathrm{d}x
\end{equation}
where $p(x)$ is the probability density function (pdf) of ${X}$ and $\mathcal{G}(\cdot)$ a measurable function of ${X}$. The expectation integral can be used to calculate specific moments of the univariate random variable as detailed in Table~\ref{tab: Statistical Moments}.  

\begin{table}[H]
\caption{Statistical Moments of Univariate Random Variables}
\label{tab: Statistical Moments}
\renewcommand{\arraystretch}{1.5} 
\setlength{\tabcolsep}{6pt}       
\centering
\begin{tabular}{|c|c|}
\hline
Statistical Moment & Expectation Calculation\\
\hline
\hline
$m$-th Raw Moment & $\mathbb{E}[{X}^m]$  \\
\hline
$m$-th Central Moment & $\mathbb{E}[({X} - \mu)^m]$   \\
\hline
$m$-th Standardized Moment & $\mathbb{E}[({X} - \mu)^m/\sigma^m]$ \\
\hline
\end{tabular}
\end{table}
Certain moments carry greater significance and have specific names, as outlined in Table~\ref{tab: Moments Names}.  
\begin{table}[H]
\caption{Terminology for Moments of Univariate Random Variables}
\label{tab: Moments Names}
\renewcommand{\arraystretch}{1.5} 
\setlength{\tabcolsep}{6pt}       
\centering
\begin{tabular}{|c||c|c|c|}
\hline
$m$& $m$-th Raw & $m$-th Central & $m$-th Standardized \\
\hline
1 & Mean ($\mu$) & - & -  \\
\hline
2 & - & Variance ($\sigma^2$) & - \\
\hline
3 & - & -  & Skewness ($\gamma$) \\
\hline
4 & - & -  & Kurtosis  ($\kappa$)\\
\hline
\end{tabular}
\end{table}
\begin{remark}
For multivariate random variables, the integrand of the expectation integral becomes a multidimensional function. Given a random vector $\boldsymbol{X}\in\mathbb{R}^{n}$, the mean is defined as a vector $\in\mathbb{R}^{n}$ and its covariance as a matrix $\in\mathbb{R}^{n\times n}$. However, standardized univariate moments such as skewness have no unique multivariate analogue; several definitions exist depending on the application \cite{Averous-skewness, Jammalamadaka-skewness}. The paper's interpretation of these parameters is given in Section~\ref{sec: High-ordered Moments}. 
\end{remark}
For brevity, let $f^{(m)}(\boldsymbol{X})$ be an arbitrary function that relates the multi-dimensional $\boldsymbol{X}$ to any statistical moment:
\begin{equation}
\mathbb{E}[f^{(m)}(\boldsymbol{X})]=
\int_{\mathbb{R}^n} f^{(m)}(\boldsymbol{x})p(\boldsymbol{x})\,\mathrm{d}\boldsymbol{x}
\end{equation}

\subsection{Cubature Approximation to Expectation Integral}
Cubature points, or sigma points, are discrete points with associated weights used to approximate the multidimensional expectation integral as a summation. Let $\boldsymbol{x}^{(i)}$ be the cubature points associated with the pdf $p(\boldsymbol{x})$:
\begin{equation}\label{eq: cubature rule}
\mathcal{Q}\left[p(\boldsymbol{x})\right]
\triangleq 
\{(\boldsymbol{x}^{(i)},w_i)\}_{i\in \mathbb{Z}_{1:n_s}}
\end{equation}
where $\mathcal{Q}$ denotes the chosen cubature rule. Then,
\begin{equation}\label{eq: sigma point sum}
    \mathbb{E}[\mathcal{G}(\boldsymbol{X})] \approx \sum^{n_s}_{i=1} w_i \mathcal{G}(\boldsymbol{x}^{(i)})
\end{equation}
where $n_s$ denotes the number of cubature points. For brevity, notation for summations denotes only the index (e.g., $\sum_i$). There are many approaches for computing cubature points, including the unscented transform \cite{Julier-UT}, the conjugate unscented transform \cite{Adurthi-CUT}, and higher-order unscented transforms \cite{Ponomareva-HO-UT}. This paper makes the following assumption regarding the selection of the points and weights:
\begin{assumption}\label{assum: cubature points assumption}
    The cubature weights are strictly positive ($w_i > 0$) and for any prescribed finite moment order $m$, a cubature rule of sufficiently high degree can be selected to reproduce all moments up to order $m$.
\end{assumption}
The assumption ensures that any degree-$m$ moment selected for control can be accurately represented by the chosen cubature rule. The positivity assumption simplifies the subsequent factorization by avoiding additional sign-dependent cases.

\section{Problem Statement}\label{sec: problem statement}
This paper considers a discrete-time, finite-horizon, non-Gaussian steering problem. Let the time horizon be discretized into $N$ nodes, forming $N-1$ segments. Let $k = 0,1,\ldots, N-1$ denote the index for time $t_k$. The state $\boldsymbol{X}_k \in \mathbb{R}^{n_x}$ is a multi-dimensional random variable, and let $\boldsymbol{x}_k$ be the realization of $\boldsymbol{X}_k$. Likewise $\boldsymbol{U}_k \in \mathbb{R}^{n_u}$ and $\boldsymbol{u}_k$ represent the stochastic and realized control actions respectively. The notation for mean $\boldsymbol{\mu}_k = \mathbb{E}[\boldsymbol{X}_k]$ and covariance $P_k = \mathbb{E}[(\boldsymbol{X}_k-\boldsymbol{\mu}_k)(\boldsymbol{X_k}-\boldsymbol{\mu}_k)^\top]$ are defined for the state. Consider an objective $J$ to be optimized. This paper makes the following assumptions regarding the control of the distribution:
\begin{assumption}\label{assump: initial distribution}
The initial distribution $\mathcal{D}_0$ is known. 
\begin{equation}\label{eq: DS optimization (Initial Distribution)} 
\boldsymbol{X}_{0}\sim\mathcal{D}_0
\end{equation}
\end{assumption}
\begin{assumption}\label{assump: dynamics determinsitic}
The dynamics are continuous and deterministic. On each interval $[t_k,t_{k+1})$, the control action is fully specified by $u_k$: either throughout the interval (zero-order hold) or applied at $t_k$, followed by uncontrolled evolution (impulsive control).
\end{assumption}
Under Assumption~\ref{assump: dynamics determinsitic}, let the dynamics induce the flow map
\begin{equation}\label{eq: DS optimization (Dynamics and Control)}
\boldsymbol{x}_{k+1} = \varphi_{\Delta t_k}(\boldsymbol{x}_k; \boldsymbol{u}_k)
\end{equation}
where $\Delta t_k = t_{k+1}-t_k$. The distribution should also satisfy any inequality or equality constraints imposed on any statistical moments at any time index:
\begin{equation}\label{eq: DS optimization (Moment Constraints)}
h_k\left(\mathbb{E}[f^{(m)}(\boldsymbol{X}_k)] \right)\leq 0
,\qquad
g_k\left(\mathbb{E}[f^{(m)}(\boldsymbol{X}_k)] \right)= 0
\end{equation}

The optimization problem is presented in Problem~\ref{prob: DS optimization}:
\begin{problem} \label{prob: DS optimization}
Statistical Moment Steering
\begin{subequations}
\begin{align}
\min_{
\boldsymbol{X}_k,\boldsymbol{U}_k}
\quad
& J \\
\st \quad
&\mathrm{Eq.~\eqref{eq: DS optimization (Initial Distribution)}} \\
&\mathrm{Eq.~\eqref{eq: DS optimization (Dynamics and Control)}} , \;\forall k \in \mathbb{Z}_{0:N-2}\\
&\mathrm{Eq.~\eqref{eq: DS optimization (Moment Constraints)}} , \;\forall k \in \mathbb{Z}_{0:N-1}
\end{align}
\end{subequations}
\end{problem}
Accordingly, Problem~\ref{prob: DS optimization}'s focus on constraining the distribution's moments leads this paper to name it \emph{statistical moment steering (SMS)}. This paper investigates a linear feedback control in the following form at $t_k$:
\begin{equation}\label{eq: control law}
    \boldsymbol{U}_k = \bar{\boldsymbol{u}}_k + K_k(\boldsymbol{X}_k - \boldsymbol{\mu}_k) 
\end{equation}
where $\bar{\boldsymbol{u}}_k$ is the feedforward control and $K_k$ is a feedback gain. This leads to an objective function to accommodate the feedback policy: $\min_{\boldsymbol{X}_k,\bar{\boldsymbol{u}}_k, K_k} \ J$.

\section{Liouville Equation and Statistical Moments}\label{sec: Liouville}
The Liouville equation is a partial differential equation that governs the evolution of a probability density function in deterministic dynamics \cite{Zwanzig-StatMech, Boussouira-PDE-control}. Let $p(\boldsymbol{x},t)$ be the pdf of $\boldsymbol{X}$ at time $t$. The Liouville equation is:
\begin{equation}
    \partial_t p(\boldsymbol{x},t) +  \nabla\cdot (p(\boldsymbol{x},t)f(\boldsymbol{x},\boldsymbol{u})) = 0
\end{equation}
A relationship between the pdf at $t$ and $t_0$ can be made in terms of the Jacobian of the flow map.
\begin{proposition}\label{proposition: pdf t0 to t}
The characteristic solution of the Liouville Equation is:
\begin{equation}
    p(\boldsymbol{x},t) = p(\boldsymbol{x}_0,t_0)
    \det(\Phi(t,t_0;\Pi))^{-1}
\end{equation}
\end{proposition}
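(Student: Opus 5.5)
The plan is the method of characteristics. Expanding the divergence turns the Liouville equation into an ODE for $p$ along the trajectories $\boldsymbol{x}(t)=\varphi_t(\boldsymbol{x}_0;\Pi)$ of Eq.~\eqref{eq: ODE}. Liouville's formula then turns the same divergence into a statement about $\det\Phi$, and the two are combined.

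First, write $\nabla\cdot(pf) = \nabla p\cdot f + p\,\nabla\cdot f$. Along a characteristic $\dot{\boldsymbol{x}} = f(\boldsymbol{x},\boldsymbol{u})$, the total derivative is $\frac{\mathrm{d}}{\mathrm{d}t}p(\boldsymbol{x}(t),t) = \partial_t p + \nabla p\cdot f$. The Liouville equation therefore becomes
\begin{equation*}
\frac{\mathrm{d}}{\mathrm{d}t}p(\boldsymbol{x}(t),t) = -\,p(\boldsymbol{x}(t),t)\,(\nabla\cdot f)(\boldsymbol{x}(t),\boldsymbol{u}(t)).
\end{equation*}
This is a scalar linear ODE, and it integrates to
\begin{equation*}
p(\boldsymbol{x}(t),t)=p(\boldsymbol{x}_0,t_0)\exp\!\Big(-\int_{t_0}^t \nabla\cdot f\,\mathrm{d}\tau\Big).
\end{equation*}
Under a feedback policy, $\boldsymbol{u}$ depends on $\boldsymbol{x}$. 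In that case I interpret $f$ as the closed-loop vector field $\tilde f(\boldsymbol{x},t)=f(\boldsymbol{x},\Pi(\boldsymbol{x},t))$, and the divergence is taken of $\tilde f$. This is consistent with $\Phi$ being defined as the Jacobian of the closed-loop flow map.

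Second, differentiate the flow with respect to $\boldsymbol{x}_0$. This gives the variational equation $\dot\Phi = \frac{\partial \tilde f}{\partial \boldsymbol{x}}\Phi$ with $\Phi(t_0,t_0)=I_{n_x}$. Jacobi's formula, $\frac{\mathrm{d}}{\mathrm{d}t}\det\Phi = \det\Phi\,\mathrm{tr}(\Phi^{-1}\dot\Phi)$, then gives $\frac{\mathrm{d}}{\mathrm{d}t}\det\Phi = \mathrm{tr}(\partial\tilde f/\partial\boldsymbol{x})\det\Phi = (\nabla\cdot\tilde f)\det\Phi$. Hence
\begin{equation*}
\det\Phi(t,t_0;\Pi)=\exp\!\Big(\int_{t_0}^t\nabla\cdot\tilde f\,\mathrm{d}\tau\Big)>0.
\end{equation*}
Substituting this into the result of the first step yields $p(\boldsymbol{x},t)=p(\boldsymbol{x}_0,t_0)\det(\Phi)^{-1}$ with $\boldsymbol{x}=\varphi_t(\boldsymbol{x}_0;\Pi)$, which is the claim.

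As a cross-check that avoids regularity of $p$, I would note an alternative argument. Because $\varphi_t$ is a diffeomorphism (a deterministic flow), the change-of-variables formula for pushforward densities gives $p(\boldsymbol{x},t)=p(\varphi_t^{-1}(\boldsymbol{x}),t_0)\,|\det\partial\varphi_t^{-1}/\partial\boldsymbol{x}|$. This is the same formula, since $\partial\varphi_t^{-1}/\partial\boldsymbol{x}=\Phi^{-1}$ and $\det\Phi>0$.

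The main obstacle is handling the control rigorously. Impulsive control under Assumption~\ref{assump: dynamics determinsitic} makes the flow only piecewise smooth in time. I would address this by treating each impulse as a map $\boldsymbol{x}\mapsto \boldsymbol{x}+B(\bar{\boldsymbol{u}}_k+K_k(\boldsymbol{x}-\boldsymbol{\mu}_k))$. Its Jacobian $I+BK_k$ must be assumed nonsingular, and the change-of-variables step is applied to it. The determinants then compose multiplicatively through the chain rule $\Phi = \Phi_{\text{after}}(I+BK_k)\Phi_{\text{before}}$. In that case the absolute value of the determinant should formally be kept, though continuity from the identity gives positivity in the smooth case.
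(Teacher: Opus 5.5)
The paper gives no argument for this proposition beyond citing \cite{Boussouira-PDE-control}, so your proof replaces that citation rather than competing with a route in the paper. It is correct. Reducing the Liouville equation to $\tfrac{\mathrm{d}}{\mathrm{d}t}p=-p\,\nabla\cdot\tilde f$ along characteristics, and matching this with the Abel--Jacobi--Liouville identity $\det\Phi=\exp\bigl(\int_{t_0}^{t}\nabla\cdot\tilde f\,\mathrm{d}\tau\bigr)$, is the standard derivation behind such references.

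Your version gains two things over the citation. First, $\det\Phi>0$ comes out of the same computation, whereas the paper imports it separately from \cite{Meiss-DST} in the proof of Lemma~\ref{lemma: stat moment at t}. Your pushforward cross-check also shows that this lemma is simply $\mathbb{E}[f^{(m)}(\boldsymbol{X}_t)]=\mathbb{E}[f^{(m)}(\varphi_t(\boldsymbol{X}_0;\Pi))]$. That identity holds for any measurable flow map and needs neither the density formula nor the sign of $\det\Phi$.

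Second, your impulsive-control remark is a genuine correction to the statement in the paper's own setting. Under the feedback law \eqref{eq: control law}, the jump Jacobian $I_{n_x}+BK_k$ acts instantaneously, so the continuity-from-identity argument does not reach it. With the example's $B=[0_{2\times 2}\;\;I_2]^\top$, one gets $\det(I_4+BK_k)=\det(I_2+K_k^{v})$, where $K_k^{v}$ is the velocity block of $K_k$, and this can be zero or negative. So the formula should carry $|\det\Phi|^{-1}$ together with a nonsingularity hypothesis.

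The same caveat applies to the zero-order-hold branch of Assumption~\ref{assump: dynamics determinsitic}. Your field $\tilde f(\boldsymbol{x},t)=f(\boldsymbol{x},\Pi(\boldsymbol{x},t))$ does not literally cover that case, because $\boldsymbol{u}$ on $[t_k,t_{k+1})$ depends on $\boldsymbol{x}(t_k)$ rather than on $\boldsymbol{x}(t)$. The intra-interval Jacobian then obeys the inhomogeneous equation $\dot J=(\partial f/\partial\boldsymbol{x})J+(\partial f/\partial\boldsymbol{u})K_k$, which can pass through singularity. For instance, $\dot x=u$ with $u=\bar u_k+K_k(x_k-\mu_k)$ gives $\Phi=1+\Delta t_kK_k$. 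Your positivity-by-continuity argument therefore holds only while that intra-interval map remains invertible.
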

\begin{proof}
    Found in \cite{Boussouira-PDE-control}.
\end{proof}

With Proposition~\ref{proposition: pdf t0 to t}, the following lemma can be made regarding the time evolution of a distribution's statistical moments:
\begin{lemma}\label{lemma: stat moment at t}
The moment propagation under the flow map is:
\begin{equation}\label{eq: stat moment at t}
\int_{\mathbb{R}^n} f^{(m)}(\boldsymbol{x})
p(\boldsymbol{x},t)
\,\mathrm{d}\boldsymbol{x}
=
\int_{\mathbb{R}^n} f^{(m)}(\varphi_t(\boldsymbol{x}_0;\Pi))
p(\boldsymbol{x}_0,t_0)
\,\mathrm{d}\boldsymbol{x}_0
\end{equation}
\end{lemma}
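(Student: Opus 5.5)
The plan is a change of variables in the left-hand integral of \eqref{eq: stat moment at t}, substituting $\boldsymbol{x}=\varphi_t(\boldsymbol{x}_0;\Pi)$, followed by Proposition~\ref{proposition: pdf t0 to t} to cancel the Jacobian factor.

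\textbf{Step 1: the flow map is a valid change of variables.} For a fixed control policy $\Pi$, the map $\boldsymbol{x}_0\mapsto\varphi_t(\boldsymbol{x}_0;\Pi)$ should be a diffeomorphism of $\mathbb{R}^n$ onto its image. It is differentiable by assumption. It is invertible because the dynamics are deterministic (Assumption~\ref{assump: dynamics determinsitic}), so the trajectory can be integrated backward in time. Its Jacobian is $\Phi(t,t_0;\Pi)$, and $\det\Phi>0$ by Liouville's formula $\det\Phi(t,t_0)=\exp\int_{t_0}^t \nabla\cdot f\,\mathrm{d}s$. Hence no absolute value is needed, consistent with Proposition~\ref{proposition: pdf t0 to t}.

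\textbf{Step 2: substitute.} With $\boldsymbol{x}=\varphi_t(\boldsymbol{x}_0;\Pi)$ we have $\mathrm{d}\boldsymbol{x}=\det(\Phi(t,t_0;\Pi))\,\mathrm{d}\boldsymbol{x}_0$, so
\begin{equation*}
\int_{\mathbb{R}^n} f^{(m)}(\boldsymbol{x})p(\boldsymbol{x},t)\,\mathrm{d}\boldsymbol{x}
=\int f^{(m)}(\varphi_t(\boldsymbol{x}_0;\Pi))\,p(\varphi_t(\boldsymbol{x}_0;\Pi),t)\,\det(\Phi(t,t_0;\Pi))\,\mathrm{d}\boldsymbol{x}_0 .
\end{equation*}

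\textbf{Step 3: cancel the determinant.} Proposition~\ref{proposition: pdf t0 to t} gives $p(\varphi_t(\boldsymbol{x}_0;\Pi),t)=p(\boldsymbol{x}_0,t_0)\det(\Phi)^{-1}$. The determinant factors cancel, which yields the right-hand side of \eqref{eq: stat moment at t}.

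\textbf{Step 4: integrability and domain.} Assume $f^{(m)}$ has a finite expectation, so both integrals exist; this is needed for the moment to be meaningful at all. If the flow maps onto a proper subset of $\mathbb{R}^n$, then $p(\cdot,t)$ vanishes off the image. The left-hand integral therefore reduces to an integral over the image, and the identity still holds.

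\textbf{Main obstacle.} The main difficulty is Step 1, namely justifying global invertibility of the flow map, especially under impulsive control. There the map is the composition of $\boldsymbol{x}\mapsto\boldsymbol{x}+$(impulse) with a flow. The impulse depends on the state through the feedback law \eqref{eq: control law}, so its Jacobian is $I+BK_k$, which may be singular.

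I would handle this by assuming $\Pi$ yields a diffeomorphism, which is implicit in Proposition~\ref{proposition: pdf t0 to t} since it uses $\det(\Phi)^{-1}$. If that assumption is not acceptable, there is an alternative route that avoids inverting $\varphi_t$: read the identity as the pushforward definition of $p(\cdot,t)$. Then $\mathbb{E}[f^{(m)}(\boldsymbol{X}_t)]=\mathbb{E}[f^{(m)}(\varphi_t(\boldsymbol{X}_0;\Pi))]$ by the law of the unconscious statistician, since $\boldsymbol{X}_t=\varphi_t(\boldsymbol{X}_0;\Pi)$ pathwise. This argument needs only measurability of the flow map.
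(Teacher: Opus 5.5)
Your proof is correct and is essentially the paper's argument. The paper applies Proposition~\ref{proposition: pdf t0 to t} first and then changes variables via $\boldsymbol{x}=\varphi_t(\boldsymbol{x}_0;\Pi)$, dropping the absolute value because $\det(\Phi(t,t_0;\Pi))>0$ (it cites \cite{Meiss-DST} for this rather than Liouville's formula), so your Steps 2--3 are the same cancellation done in the opposite order. Your observation that the impulsive feedback map contributes a factor $I+BK_k$, whose determinant need not be positive, is a subtlety that the paper's appeal to continuous-time flows does not cover, and your pushforward (law of the unconscious statistician) fallback avoids it cleanly.
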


\begin{proof}
The statistical moments at $t$ can be expressed with the pdf at $t_0$ using Proposition~\ref{proposition: pdf t0 to t}:
\begin{equation}
\int_{\mathbb{R}^n} f^{(m)}(\boldsymbol{x})
p(\boldsymbol{x},t)
\,\mathrm{d}\boldsymbol{x}
=
\int_{\mathbb{R}^n} f^{(m)}(\boldsymbol{x})
p(\boldsymbol{x}_0,t_0)
\det(\Phi(t,t_0;\Pi))^{-1}
\,\mathrm{d}\boldsymbol{x}
\end{equation}
Performing a change of variables in the integral: 
\begin{equation}
\begin{aligned}
\int \ldots \int 
    \mathrm{d}x_1\ldots \mathrm{d}x_{n_x} &=
    \int \ldots \int
    \left|
    \det \left(
    \frac{\partial \boldsymbol{x}(t)}{\partial \boldsymbol{x}_0}
    \right)
    \right|
    \mathrm{d}x_{0,1}\ldots \mathrm{d}x_{0,n_x}\\
     &=
     \int_{\mathbb{R}^n} 
     \left|
     \det(\Phi(t,t_0;\Pi))
     \right|
     \mathrm{d}\boldsymbol{x}_0
 \end{aligned}
\end{equation}
Since for continuous-time dynamical systems $\det(\Phi(t,t_0;\Pi)) > 0$ \cite{Meiss-DST}, this results in the cancellation of the determinant in the integral. By also substituting $\boldsymbol{x} = \varphi_t(\boldsymbol{x}_0;\Pi)$, it can be seen that the statistical moments at $t$ can be expressed by the statistical moment at $t_0$ with the flow map to $t$ in Eq.~\eqref{eq: stat moment at t}.
\end{proof}

\subsection{Reachability of Moments}
For an unconstrained deterministic linear system that is controllable over the prescribed horizon, any positive-definite terminal covariance is reachable from a positive-definite initial covariance \cite{Liu-CS-controllability}. Analogous nonlinear or non-Gaussian reachability statements are generally more difficult due to the lack of a closed-form representation of the distribution. Accordingly, rather than seeking a complete characterization of nonlinear distributional reachability, this paper provides the following insights into statistical moment reachability:
\begin{lemma} \label{lemma: linear-moment-coupling} 
Consider the discrete-time linear system $\boldsymbol{X}_{k+1} =A_k\boldsymbol{X}_k+B_k\boldsymbol{U}_k $ under the affine control policy in Eq.~\eqref{eq: control law}. Consider control to an arbitrary final time $t_{N-1}$. Then, 
\begin{equation} \label{eq: affine mapping of Xk}
\boldsymbol{X}_{N-1} =\Phi_{N-1}\boldsymbol{X}_0+\boldsymbol{b}_{N-1}, \qquad \boldsymbol{b}_{N-1} =\boldsymbol{\mu}_{N-1}-\Phi_{N-1}\boldsymbol{\mu}_0 
\end{equation} where $\Phi_{N-1} =(A_{N-2}+B_{N-2}K_{N-2})\cdots(A_0+B_0K_0)$. Consequently, the reachable terminal moments are restricted to those induced jointly by the same affine map $(\Phi_{N-1},\boldsymbol{b}_{N-1})$; thus, not every prescribed collection of terminal moments is reachable.
\end{lemma}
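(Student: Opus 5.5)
The plan is an induction on $k$ that shows the closed-loop map from $\boldsymbol{X}_0$ to $\boldsymbol{X}_k$ is affine, followed by the observation that every terminal moment is a functional of one and the same affine push-forward of $\mathcal{D}_0$.

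\textbf{Step 1: one-step recursion.} Substituting the control law in Eq.~\eqref{eq: control law} into the linear dynamics gives
\begin{equation*}
\boldsymbol{X}_{k+1} = (A_k+B_kK_k)\boldsymbol{X}_k + B_k\bar{\boldsymbol{u}}_k - B_kK_k\boldsymbol{\mu}_k .
\end{equation*}
This is an affine map of $\boldsymbol{X}_k$ with deterministic offset $\boldsymbol{c}_k = B_k\bar{\boldsymbol{u}}_k - B_kK_k\boldsymbol{\mu}_k$. The offset is deterministic because $\boldsymbol{\mu}_k$ is a fixed vector, not a random quantity.

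\textbf{Step 2: induction.} Assume $\boldsymbol{X}_k=\Phi_k\boldsymbol{X}_0+\boldsymbol{b}_k$, with $\Phi_k$ the ordered product of the closed-loop matrices and $\boldsymbol{b}_k$ deterministic. Composing with Step 1 gives $\Phi_{k+1}=(A_k+B_kK_k)\Phi_k$ and $\boldsymbol{b}_{k+1}=(A_k+B_kK_k)\boldsymbol{b}_k+\boldsymbol{c}_k$. The base case is $\Phi_0=I$, $\boldsymbol{b}_0=\boldsymbol{0}$. Iterating up to $k=N-1$ yields the product form of $\Phi_{N-1}$ stated in the lemma.

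\textbf{Step 3: identify the offset.} Rather than expanding the recursion for $\boldsymbol{b}_{N-1}$, take expectations of $\boldsymbol{X}_{N-1}=\Phi_{N-1}\boldsymbol{X}_0+\boldsymbol{b}_{N-1}$. By linearity this gives $\boldsymbol{\mu}_{N-1}=\Phi_{N-1}\boldsymbol{\mu}_0+\boldsymbol{b}_{N-1}$, which is exactly Eq.~\eqref{eq: affine mapping of Xk}.

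\textbf{Step 4: moment coupling.} This is the step that needs care, since it is qualitative rather than computational. Lemma~\ref{lemma: stat moment at t}, specialized to the affine flow map, gives
\begin{equation*}
\mathbb{E}[f^{(m)}(\boldsymbol{X}_{N-1})]=\int f^{(m)}(\Phi_{N-1}\boldsymbol{x}_0+\boldsymbol{b}_{N-1})\,p(\boldsymbol{x}_0,t_0)\,\mathrm{d}\boldsymbol{x}_0 .
\end{equation*}
So every terminal moment is determined by the single pair $(\Phi_{N-1},\boldsymbol{b}_{N-1})$ acting on the fixed $\mathcal{D}_0$. The set of achievable moment collections is therefore the image of the finite-dimensional parameter set $(\Phi,\boldsymbol{b})$, of dimension at most $n_x^2+n_x$.

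\textbf{Step 5: exhibit an unreachable collection.} It is enough to give one concrete prescribed collection that no affine map can produce; I would use standardized moments. If $\Phi_{N-1}$ is invertible, the terminal standardized skewness and kurtosis are invariant in the scalar case: the variable $(X-\mu)/\sigma$ maps to $\pm(X_0-\mu_0)/\sigma_0$. Hence $\kappa_{N-1}=\kappa_0$ and $|\gamma_{N-1}|=|\gamma_0|$, so prescribing, for example, the mean, the variance and a kurtosis different from $\kappa_0$ cannot be achieved. For singular $\Phi$ the variance is degenerate along some direction, so a prescribed positive-definite covariance is already excluded.

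In the multivariate case I would apply the same argument to a one-dimensional projection $\boldsymbol{v}^\top\boldsymbol{X}$. The projected kurtosis equals that of $(\Phi^\top\boldsymbol{v})^\top\boldsymbol{X}_0$, so it must lie in the set of projected kurtoses of $\mathcal{D}_0$. That set is a bounded range, and the value is fixed when $\mathcal{D}_0$ is Gaussian. A prescribed value outside this range is therefore unreachable.
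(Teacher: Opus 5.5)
Your proof is correct and has the same skeleton as the paper's. The closed-loop map is affine, every terminal moment is a push-forward of $\mathcal{D}_0$ under the single pair $(\Phi_{N-1},\boldsymbol{b}_{N-1})$, affine invariance of standardized moments gives an unreachable target, and singular $\Phi_{N-1}$ is ruled out by degenerate variance.

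For the affine map, the paper's route is shorter. The centered variable $\boldsymbol{Z}_k=\boldsymbol{X}_k-\boldsymbol{\mu}_k$ obeys the homogeneous recursion $\boldsymbol{Z}_{k+1}=(A_k+B_kK_k)\boldsymbol{Z}_k$, since $\boldsymbol{\mu}_{k+1}=A_k\boldsymbol{\mu}_k+B_k\bar{\boldsymbol{u}}_k$. So $\boldsymbol{b}_{N-1}$ falls out directly, and your inhomogeneous induction plus expectation step, though equivalent, is not needed.

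The substantive difference is in the unreachability part. The paper only exhibits a scalar example, using $|\gamma(X_{N-1})|=|\gamma(X_0)|$. Your kurtosis invariance is the same idea without the sign ambiguity. Your projection argument goes further: it establishes the restriction for every $n_x$ and every $\mathcal{D}_0$ with finite fourth moments, which matches the multivariate statement of the lemma rather than illustrating it in one dimension. The boundedness you assert there needs one line of justification. For $P_0\succ 0$, Cauchy--Schwarz gives $\kappa(\boldsymbol{w}^\top\boldsymbol{X}_0)\le\mathbb{E}\|\boldsymbol{X}_0-\boldsymbol{\mu}_0\|^4/\lambda_{\min}(P_0)^2$ for every $\boldsymbol{w}\neq\boldsymbol{0}$.

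Finally, invoking Lemma~\ref{lemma: stat moment at t} in Step 4 is unnecessary and slightly off. That lemma rests on the Liouville density formula with $\det\Phi>0$, which the discrete closed loop $A_k+B_kK_k$ need not satisfy, since it can be singular or orientation-reversing. All you need is to substitute the pathwise equality $\boldsymbol{X}_{N-1}=\Phi_{N-1}\boldsymbol{X}_0+\boldsymbol{b}_{N-1}$ into the expectation, which is what the paper does.
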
 

\begin{proof} 
Defining $\boldsymbol{Z}_k=\boldsymbol{X}_k-\boldsymbol{\mu}_k$ gives $\boldsymbol{Z}_{k+1} =(A_k+B_kK_k)\boldsymbol{Z}_k.$ Thus, $\boldsymbol{Z}_{N-1}=\Phi_{N-1}\boldsymbol{Z}_0$, and substituting back $\boldsymbol{X}_k$ yields Eq.~\eqref{eq: affine mapping of Xk}. Hence, for every moment function, 
\begin{equation}
    \mathbb{E}\!\left[f^{(m)}(\boldsymbol{X}_{N-1})\right] = \mathbb{E}\!\left[ f^{(m)}(\Phi_{N-1}\boldsymbol{X}_0+\boldsymbol{b}_{N-1})\right]
\end{equation}
So all moments depend on the same affine map. To show that this restriction excludes specific admissible moments, consider the scalar case and define the skewness $\gamma(X) = {\mathbb{E}\!\left[(X-\mathbb{E}[X])^3\right]} {\operatorname{Var}(X)^{-3/2}}$. For $X_{N-1}=\Phi_{N-1} X_0+b_{N-1}$ with $ \Phi_{N-1} \neq0$, $ \gamma(X_{N-1}) = \Phi_{N-1}^3|\Phi_{N-1}|^{-3}\gamma(X_0) = \mathrm{sign}(\Phi_{N-1})\gamma(X_0)$. Hence, $|\gamma(X_{N-1})|=|\gamma(X_0)|$; an affine map can only preserve or reverse the sign of the initial skewness. If $\Phi_{N-1}=0$, then $\mathrm{Var}(X_{N-1})=0$. Therefore, any target with positive variance and $|\gamma(X_{N-1})|\neq|\gamma(X_0)|$ is unreachable, meaning not every prescribed collection of terminal moments is reachable.
\end{proof}

\begin{corollary}\label{col: nonlinear dist control}
Even with the affine control policy in Eq.~\eqref{eq: control law}, nonlinear dynamics can generate higher-order terminal moments that are unreachable through an affine terminal map. 
\end{corollary}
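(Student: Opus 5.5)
The corollary is existential, so I will exhibit one concrete system. The plan is to build a scalar nonlinear system in which, under the affine policy of Eq.~\eqref{eq: control law}, the terminal skewness magnitude differs from the initial skewness magnitude. By the argument in the proof of Lemma~\ref{lemma: linear-moment-coupling}, any affine terminal map $X_{N-1}=\Phi_{N-1}X_0+b_{N-1}$ with $\Phi_{N-1}\neq 0$ forces $|\gamma(X_{N-1})|=|\gamma(X_0)|$. If $\Phi_{N-1}=0$, the terminal variance is zero. So a nonlinear system that reaches a terminal distribution with positive variance and $|\gamma(X_{N-1})|\neq|\gamma(X_0)|$ settles the claim: that moment collection is reachable nonlinearly but by no affine terminal map.

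Concretely, I will take $X_0\sim\mathcal{N}(0,1)$, so $\gamma(X_0)=0$. I will use the scalar flow $\dot x = x^2$, whose flow map is $\varphi_t(x_0)=x_0/(1-t x_0)$. That map blows up in finite time for large $x_0$, so a globally defined map is safer. I would instead use a polynomial flow such as $\dot x = \epsilon x^2$ on a short horizon, or more cleanly a discrete-time step $x_{k+1}=x_k+\epsilon x_k^2+u_k$. This is consistent with Eq.~\eqref{eq: DS optimization (Dynamics and Control)} as a one-step flow map $\varphi$. With $N=2$, $\bar u_0=0$ and $K_0=0$, the terminal state is $X_1=X_0+\epsilon X_0^2$.

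Next I compute the moments using Lemma~\ref{lemma: stat moment at t}, which lets me take expectations over $X_0$. Writing $Y=X_0$, the Gaussian moments are $\mathbb{E}[Y^2]=1$, $\mathbb{E}[Y^4]=3$, $\mathbb{E}[Y^6]=15$. These give $\mu_1=\epsilon$ and $\mathrm{Var}(X_1)=1+2\epsilon^2>0$. The third central moment of $Y+\epsilon(Y^2-1)$ works out to $6\epsilon+8\epsilon^3$. Hence
\[
\gamma(X_1)=\frac{6\epsilon+8\epsilon^3}{(1+2\epsilon^2)^{3/2}}\neq 0=|\gamma(X_0)| \quad \text{for } \epsilon>0 .
\]
It follows that no affine terminal map from $X_0$ produces this pair (variance, skewness), which proves the corollary.

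The main obstacle is making sure the dynamics fit the paper's framework. Assumption~\ref{assump: dynamics determinsitic} asks for continuous-time deterministic dynamics, and a finite-time-blowup ODE would not give a globally defined flow map. If a genuine ODE is required, I would use a globally Lipschitz nonlinearity, for example $\dot x = \epsilon\tanh(x)^2$. Nonzero skewness then follows from a first-order expansion in $\epsilon$: the correction $\epsilon t\tanh(Y)^2$ is an even function of $Y$, so it induces a third central moment proportional to $\epsilon\,\mathbb{E}[Y^2\tanh(Y)^2]>0$ at leading order. The remaining detail is to keep $\epsilon$ small enough that the higher-order terms cannot cancel this leading term.
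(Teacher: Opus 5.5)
Your argument is correct, and it takes a different route from the paper. The paper's proof is only qualitative. It uses Lemma~\ref{lemma: stat moment at t} to write terminal moments as expectations of $f^{(m)}\circ\varphi$ over $\boldsymbol{X}_0$, and notes that no constant $(\Phi_f,\boldsymbol{b}_f)$ represents a non-affine flow. It concludes that the coupling of Lemma~\ref{lemma: linear-moment-coupling} ``need not hold,'' and asserts without an example that a nonlinear flow can change $|\gamma|$. You instead exhibit a witness and reuse the skewness invariance from the proof of Lemma~\ref{lemma: linear-moment-coupling} to rule out every affine terminal map. Your computations check out: $\mu_1=\epsilon$, variance $1+2\epsilon^2$, and third central moment $6\epsilon+8\epsilon^3$. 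Your route actually certifies the existential claim, which ``need not hold'' does not, and it gives a size $\gamma\approx 6\epsilon$. The paper's version is shorter and shows the general mechanism without checking that a specific example fits Assumption~\ref{assump: dynamics determinsitic}.

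On that fit, two small corrections.

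First, the map $x\mapsto x+\epsilon x^2$ is not injective, so it cannot be the time-$\Delta t$ map of any ODE. It is still a valid nonlinear discrete-time witness. But staying inside Assumption~\ref{assump: dynamics determinsitic} genuinely needs your ODE fallback, not only because of finite-time blow-up. Also, for that map you do not need Lemma~\ref{lemma: stat moment at t}: the pushforward identity $\mathbb{E}[f(X_1)]=\mathbb{E}[f(\varphi(X_0))]$ holds for any measurable map.

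Second, in the $\tanh$ fallback you forgot to center the even correction. The leading term of the third central moment is $3\epsilon t\,\mathrm{Cov}(Y^2,\tanh^2 Y)$, not a multiple of $\mathbb{E}[Y^2\tanh^2 Y]$. It is still strictly positive, because $Y^2$ and $\tanh^2 Y$ are both strictly increasing in $|Y|$. The remainder is uniform in $x_0$: from $|x(s)-x_0|\le\epsilon s$ and the boundedness of $(\tanh^2)'$ you get $|\varphi_t(x_0)-x_0-\epsilon t\tanh^2 x_0|\le C\epsilon^2t^2$. Hence the third central moment is $3\epsilon t\,\mathrm{Cov}(Y^2,\tanh^2 Y)+O(\epsilon^2t^2)$ while the variance stays near $1$, which closes the argument for small $\epsilon t$.
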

\begin{proof}
From Lemma~\ref{lemma: stat moment at t}, if the flow to $\varphi_f$ is non-affine on the support of $\boldsymbol{X}_0$, no constant $(\Phi_f,\boldsymbol{b}_f)$ represents this flow. Consequently, the affine moment restrictions in Lemma~\ref{lemma: linear-moment-coupling} need not hold. For example, an affine scalar map preserves the magnitude of skewness, whereas a nonlinear flow can change it. Such a terminal skewness can be made reachable, demonstrating the additional control authority provided by the nonlinear dynamics.
\end{proof}

\section{Optimal Statistical Moment Steering}\label{sec: OSMS}
Given the difficulties of solving the Liouville equation as a partial differential equation under control, this paper introduces the cubature approximation approach to controlling non-Gaussian distributions. Namely, this paper leverages the relationships found in Section~\ref{sec: Liouville} to provide a theoretical basis for SMS. Using the initial cubature rule from Eq~\eqref{eq: cubature rule} on the initial density: 
\begin{equation}\label{eq: initial cubature point}
\mathcal{Q}\left[p(\boldsymbol{x}_0,t_0)\right]
=
\{(\boldsymbol{x}^{(i)}_0,w_i)\}_{i\in \mathbb{Z}_{1:n_s}}
\end{equation}
Applying Lemma~\ref{lemma: stat moment at t} and Eq.~\eqref{eq: sigma point sum} gives
\begin{equation}
    \begin{aligned}
\int_{\mathbb{R}^n} f^{(m)}(\boldsymbol{x})
p(\boldsymbol{x},t)
\,\mathrm{d}\boldsymbol{x}
&=
\int_{\mathbb{R}^n} f^{(m)}(\varphi_t(\boldsymbol{x}_0;\Pi))
p(\boldsymbol{x}_0,t_0)
\,\mathrm{d}\boldsymbol{x}_0 =\\
\mathbb{E}[f^{(m)}(\varphi_t(\boldsymbol{X}_0;\Pi))]
&\approx 
\sum_i w_i f^{(m)}(\varphi_t(\boldsymbol{x}^{(i)}_0;\Pi))
    \end{aligned}
\end{equation}

Thus, the moments at time $t$ can be approximated by propagating the initial cubature points through the flow map:
\begin{equation}\label{eq: flow dt}
\boldsymbol{x}^{(i)}_{k+1} = \varphi_{\Delta t_k}(\boldsymbol{x}^{(i)}_k; \boldsymbol{u}_k),
\qquad
\forall i\in\mathbb{Z}_{1:n_s}
\end{equation}
Subsequently, the moment constraints are then expressed in terms of the cubature points:
\begin{equation}\label{eq: cubature point moments constraints}
h_k\left(
\sum_i w_i f^{(m)}(\boldsymbol{x}^{(i)}_k)
\right)\leq 0
,\qquad
g_k\left(
\sum_i w_i f^{(m)}(\boldsymbol{x}^{(i)}_k)
\right)= 0
\end{equation}

Thus, Problem~\ref{prob: DS optimization} can be written in terms of the approximation with cubature points:
\begin{problem}\label{prob: OSMS Noncovex}
SMS with Cubature Points
\begin{subequations}
\begin{align}
\min_{\boldsymbol{x}^{(i)}_k,\bar{\boldsymbol{u}}_k,K_k}
\quad
& J \\
\st \quad
&\mathrm{Eq.~\eqref{eq: initial cubature point}} \\
&\mathrm{Eq.~\eqref{eq: flow dt}} , \;\forall k \in \mathbb{Z}_{0:N-2}\\
&\mathrm{Eq.~\eqref{eq: cubature point moments constraints}} , \;\forall k \in \mathbb{Z}_{0:N-1}
\end{align}
\end{subequations}
\end{problem}

\begin{remark}\label{remark: linear-moment-controllability}
For linear systems, Lemma~\ref{lemma: linear-moment-coupling} showed that not all moments are reachable, implying the cubature points cannot be moved independently or rearranged arbitrarily; their terminal configuration must be an affine image of their initial configuration. Conversely in nonlinear systems, Corollary~\ref{col: nonlinear dist control} shows that there can be additional control over higher-order moments, even with an affine control policy.
\end{remark}

\subsection{Measures of Risk}
If $\mathcal{Z}$ is a random variable associated with risk, a risk measure is defined as $\rho:\mathcal{L} \rightarrow \mathbb{R}$ where $\mathcal{Z}\in \mathcal{L}$ and $\mathcal{L}$ is the space of random variables that represent risk over some fixed time interval \cite{coherent-risk-measure-original}. In SMS, risk is associated directly with the stochastic state and control $\{\boldsymbol{X}_k,\boldsymbol{U}_k\}_{k\in\mathbb{Z}_{0:N-1}}$. Although risk is stochastic, risk measures map it to a single deterministic quantity for optimization. The objective function to the SMS optimization problem is then 
\begin{equation}
    J = \rho\left(\{\boldsymbol{X}_k,\boldsymbol{U}_k\}_{
k\in\mathbb{Z}_{0:N-1}}\right) 
\end{equation}
A common risk measure for CS is the expected quadratic fuel cost \cite{Liu-CS, Ridderhof-CS}. Given the control policy in Eq.~\eqref{eq: control law}, it can be shown that this objective can be written in terms of the feedforward term and the state covariance:  
\begin{equation}\label{eq: quadratic fuel}
\begin{aligned}
\mathbb{E}
\left[
    \sum_{k=0}^{N-2}
    \boldsymbol{U}_k^\top
    R_k
    \boldsymbol{U}_k
\right]=
\sum_{k=0}^{N-2}
\left[
    \bar{\boldsymbol{u}}_k^\top
    R_k
    \bar{\boldsymbol{u}}_k
    +
    \left\|
        R_k^{1/2}
        K_k
        P_k^{1/2}
    \right\|_{\mathrm{F}}^2
\right]
\end{aligned}
\end{equation}
with control weight matrix $R_k \succ0$. 

Another risk measure is Conditional Value at Risk (CVaR), defined as $\text{CVaR}_{\alpha}(\mathcal{Z}) \triangleq \mathbb{E}[\mathcal{Z}|\mathcal{Z} \geq Q_{\mathcal{Z}}(\alpha)]$ where $Q_{\mathcal{Z}}(\alpha)$ is the quantile function of $\mathcal{Z}$ evaluated at a given $\alpha$ \cite{coherent-risk-measure-original}. This can also be expressed as a convex function in terms of $\mathcal{Z}$, which this paper uses as an objective to minimize the sum of control norms:
\begin{equation}\label{eq: CVar fuel}
\begin{aligned}
    &\text{CVaR}_{\alpha}\left(\sum_{k=0}^{N-2} ||\boldsymbol{U}_k||_2\right) =
    \\
    &\inf_{\tau}\ 
    \left[ \tau + \frac{1}{1-\alpha} \sum_i w_i\left(\sum_{k=0}^{N-2} ||\boldsymbol{u}^{(i)}_k||_2-\tau\right)^+ \right]
\end{aligned}
\end{equation}
where $\tau \in \mathbb{R}$, $(z)^+ = \max(z,0)$, and control cubature points $\boldsymbol{u}^{(i)}_k = \bar{\boldsymbol{u}}_k + K_k(\boldsymbol{x}^{(i)}_k - \boldsymbol{\mu}_k)$. A notable characteristic of using CVaR as an objective function is its ability to minimize tail risk, and this characteristic has been used in previous stochastic control problems \cite{Echigo-Cvar, Qi-risk}. 

\section{SMS via Sequential Convex Programming}
This section outlines how Problem~\ref{prob: OSMS Noncovex} can be solved with sequential convex programming (SCP). The following subsections comment on the convexity of the elements and detail methods to convexify them. A GitHub implementation of the numerical example in Section~\ref{sec: results} is provided in the Appendix.

\subsection{Centralized Cubature Points}
Define a new ``centralized'' random variable $\boldsymbol{Z}_k$ such that 
\begin{equation}
    \boldsymbol{Z}_k = \boldsymbol{X}_k - \mathbb{E}[\boldsymbol{X}_k]
\end{equation}
where it can be seen that $\mathbb{E}[\boldsymbol{Z}_k] = 0$. As a result, $m$-th central moment of $\boldsymbol{X}_k$ is equivalent to the $m$-th raw moment of $\boldsymbol{Z}_k$. This fact is leveraged to simplify expressions. Let the state and centralized cubature points at the $k$-th instance be aggregated into a single vector denoted by $\mathbf{x}_k  \in\mathbb{R}^{n_x n_s }$ and $\mathbf{z}_k  \in\mathbb{R}^{n_x n_s }$ respectively.  
\begin{equation}
    \begin{aligned}
 \mathbf{x}_k =
 \begin{bmatrix}
     \boldsymbol{x}_k^{(1)} \\
     \vdots
     \\
     \boldsymbol{x}_k^{(n_s)} 
 \end{bmatrix}
&\qquad&
\mathbf{z}_k =
 \begin{bmatrix}
     \boldsymbol{z}_k^{(1)} \\
     \vdots
     \\
     \boldsymbol{z}_k^{(n_s)} 
 \end{bmatrix}
    \end{aligned}
\end{equation}
Let $\boldsymbol{x}^{(i)}_k = E_i \mathbf{x}_k$, $\boldsymbol{z}^{(i)}_k =E_i \mathbf{z}_k$, where $E_i$ is a matrix that selects the $i$-th cubature point from the aggregated cubature point vector. It is seen that
\begin{equation}
    \boldsymbol{z}_{k}^{(i)} =\boldsymbol{x}_{k}^{(i)} - \sum_i w_i \boldsymbol{x}_{k}^{(i)}
\end{equation}
With the aggregated cubature point form,
\begin{equation} \label{eq: centralized cubature points}
\begin{aligned} 
    \mathbf{z}_{k} &= \mathbf{x}_{k} - \bar{I}\sum_i w_i E_i \mathbf{x}_{k} 
    = \left(I_{n_x n_s} - \bar{I}\sum_i w_i E_i \right)
    \mathbf{x}_{k} = A^{(z)} \mathbf{x}_{k}
\end{aligned}
\end{equation}
where $\bar{I} = [I_{n_x}\;I_{n_x}\ldots\;I_{n_x}]^\top$ and $A^{(z)}$ is the exact linear relationship between the aggregated vectors.

\subsection{Convexification of Statistical Moments}
Let $(^*)$ denote a reference value, and $\delta$ denote the deviation from the reference value. For example, $\boldsymbol{x}^{(i)*}_k$ denotes the reference for the $i$-th cubature point, and $\mathbf{x}^{*}_k$ denotes the aggregated cubature points at $k$-th instance. Subsequently, $\boldsymbol{x}^{(i)}_k = \boldsymbol{x}^{(i)*}_k + \delta \boldsymbol{x}^{(i)}_k$ and $\mathbf{x}_k = \mathbf{x}^{*}_k + \delta\mathbf{x}_k$. 

\subsubsection{Mean}
The state mean at $t_k$ is defined as $\boldsymbol{\mu}_k = \mathbb{E}[\boldsymbol{X}_k]$. Calculating with the state's cubature points,
\begin{equation}\label{eq: convex, mean}
    \begin{aligned}
\boldsymbol{\mu}_k= \sum_i w_i \boldsymbol{x}^{(i)}_k = \sum_i w_i E_i \mathbf{x}_k= \left(\sum_i w_i E_i \right) \mathbf{x}_k = A^{(\mu)} \mathbf{x}_k 
    \end{aligned}
\end{equation}
where $A^{(\mu)}$ is the exact linear relationship between aggregated cubature points and the mean. 

\subsubsection{Covariance}
The covariance at $t_k$ is written in terms of the cubature points:
\begin{equation}\label{eq: Pk_full} 
    P_k = \sum_i w_i (\boldsymbol{z}^{(i)}_k)(\boldsymbol{z}^{(i)}_k)^\top
\end{equation}
As per Assumption~\ref{assum: cubature points assumption}, the weights are all strictly positive. Then, it can be seen that $P_k^{1/2}$ can be found by
\begin{equation}\label{eq: Pk_sqrt} 
P_k^{1/2} 
    =
    \begin{bmatrix}
    \sqrt{w_1}  \boldsymbol{z}^{(1)}_k
    &
    \sqrt{w_2}  \boldsymbol{z}^{(2)}_k
    &
    \ldots
    \end{bmatrix}
\end{equation}
where $P_k^{1/2} \in \mathbb{R}^{n_x \times n_s}$. Using square-root covariance has two main advantages. Firstly, square-root covariance is affine with respect to the cubature points, while the full covariance in Eq.~\eqref{eq: Pk_full} is not. The square-root covariance can then be a convex constraint on the largest eigenvalue of the full covariance:
\begin{equation}\label{eq: Pk_sqrt norm} 
    \sqrt{\lambda_{\max}(P_k)} = \left\|{P_k^{1/2}}\right\|_2
\end{equation}
The second main advantage is that the value of the square-root covariance is on the same numerical order as the cubature points. This poses better numerical properties than the full covariance as it requires the cubature points to be squared. 

\subsubsection{Higher-ordered Moments}\label{sec: High-ordered Moments}
Under Assumption~\ref{assum: cubature points assumption}, the cubature rule has sufficient degree to impose the desired higher-order moment constraints. Consider the $m$-th standardized moment along the $j$-th axis at $t_k$:
\begin{equation}
    {^mC_{j,k}} = \mathbb{E}\left[\left(\frac{X_{j,k} - \mu_{j,k}}{\sigma_{j,k}}\right)^m\right] =
    \mathbb{E}\left[Z_{j,k}^m\right]
    \mathbb{E}\left[Z_{j,k}^2\right]^{-m/2}
\end{equation}
where $^mC_{k} \in \mathbb{R}^{n_x}$. In terms of cubature points,
\begin{equation}
\mathbb{E}\left[Z_{j,k}^m\right] =  \sum_i w_i (e_j E_i\mathbf{z}_k)^m
\qquad
\mathbb{E}\left[Z_{j,k}^2\right]
=
\sum_i w_i (e_j E_i\mathbf{z}_k)^2
\end{equation}
where $z^{(i)}_{j,k} =e_j \boldsymbol{z}^{(i)}_k$  and $e_j$ is a matrix that selects the $j$-th component of the vector. The linearized equation for the $m$-th standardized moment is then:
\begin{equation}\label{eq: convex, HO moments}
    {^m\boldsymbol{C}_{k}} \approx {^m\boldsymbol{C}_{k}}^* + A^{(^m C)} \biggr\rvert_{\boldsymbol{z}_k^*}\delta \mathbf{z}_k     \qquad
\end{equation}
where $A^{(^m C)}$ is the linearized relationship between the centralized aggregated vector and any $m$-th standardized moment.

\subsubsection{Linearization of Flow Map}
From Eq.~\eqref{eq: flow dt}, the pdf's dynamics under control are directly approximated with the flow of the cubature points themselves. Subsequently, the dynamics of the cubature points can be linearized in this fashion:
\begin{equation}\label{eq: cubature point dynamics}
    \begin{aligned}
\boldsymbol{x}^{(i)}_{k+1}\approx A^{(i)}_k\boldsymbol{x}^{(i)}_{k} + B^{(i)}_k\boldsymbol{u}^{(i)}_k + \boldsymbol{c}^{(i)}_k,
\qquad
\forall i\in \mathbb{Z}_{1:ns}
    \end{aligned}
\end{equation}
where $A^{(i)}_k$, $B^{(i)}_k$, and $\boldsymbol{c}^{(i)}_k$ are linearized system matrices for each distinct reference cubature point $\boldsymbol{x}^{(i)*}_k$. The control mapping matrix depends on how the control is modeled. In this paper, impulsive control is considered:
\begin{equation}\label{eq: impulsive flow}
    \varphi_{\Delta t_k}(\boldsymbol{x}^{(i)}_k;\boldsymbol{u}^{(i)}_k) = \varphi^{0}_{\Delta t_k}(\boldsymbol{x}^{(i)}_k+B\boldsymbol{u}^{(i)}_k)
\end{equation}
where $\varphi^{0}_{\Delta t_k}$ denotes uncontrolled flow. Then with chain rule
\begin{equation}
    B^{(i)}_k = A^{(i)}_kB \qquad (\text{Impulsive Control})
\end{equation}
As a note, this method is not limited to impulsive control and can be extended to other control schemes, such as zero-order hold approximations of continuous control input, provided that the linearization is carried out at each distinct reference cubature point. It is important to emphasize that each linearized system matrix is with respect to its own cubature point. For nonlinear systems, it is generally true that:
\begin{equation}
    A^{(i)}_k \neq A^{(j)}_k
    \quad
  B^{(i)}_k \neq B^{(j)}_k
  \quad
    \boldsymbol{c}^{(i)}_k \neq \boldsymbol{c}^{(j)}_k
    \quad 
    \text{when $i\neq j$}
\end{equation}
This enables each cubature point’s dynamics to be locally approximated as linear within the convex optimization, while still capturing the non-Gaussian characteristics of the overall distribution it represents as per Corollary~\ref{col: nonlinear dist control}. The control law in Eq.~\eqref{eq: control law} is nonconvex, and linearized accordingly:
\begin{equation}\label{eq: control convex}
\begin{aligned}
    \boldsymbol{u}^{(i)}_k 
    &\approx 
    (\bar{\boldsymbol{u}}^*_k +\delta\bar{\boldsymbol{u}}_k )
    +
    K^*_k \boldsymbol{z}^{(i)*}_k 
    +
    \delta K_k \boldsymbol{z}^{(i)*}_k 
    +
    K^*_k \delta \boldsymbol{z}^{(i)}_k 
\end{aligned}
\end{equation}

\subsubsection{Convexified Risk Measures}
For convex optimization, the objective function must also be convex. Let $\widehat{J}$ be a general convexified objective function. The quadratic risk measure from Eq.~\eqref{eq: quadratic fuel} is convexified as:
\begin{equation}\label{eq:quadratic fuel convexified}
\begin{aligned}
\mathbb{E}
\left[
    \sum_{k=0}^{N-2}
    \boldsymbol{U}_k^\top
    R_k
    \boldsymbol{U}_k
\right]\approx
\sum_{k=0}^{N-2}
\Bigg[
&
\left(
\bar{\boldsymbol{u}}^{*}_k
+\delta\bar{\boldsymbol{u}}_k
\right)^{\top}
\boldsymbol{R}_k
\left(
\bar{\boldsymbol{u}}^{*}_k
+\delta\bar{\boldsymbol{u}}_k
\right)
\\
&+
\left\|
\boldsymbol{R}_k^{1/2}
\left(
\boldsymbol{K}^{*}_k
+\delta\boldsymbol{K}_k
\right)
\left(\boldsymbol{P}^{*}_k\right)^{1/2}
\right\|_{\mathrm{F}}^{2}
\Bigg]
\end{aligned}
\end{equation}
It should be noted that the first-order variation $\delta P_k^{1/2}$ is omitted because its inclusion degraded numerical robustness in empirical tests.

For CVaR, although Eq.~\eqref{eq: CVar fuel} is convex in the control input, the control is a nonconvex function of the optimization variables. Therefore, its first-order approximation from Eq.~\eqref{eq: control convex} is substituted into the equivalent convex CVaR formulation.

\subsection{\texttt{SCvx*}}
The chosen SCP algorithm is \texttt{SCvx*} \cite{Oguri-SCVX} for its favorable convergence guarantees. With the previous subsections, Problem~\ref{prob: OSMS Noncovex} can be written as a convex subproblem:
\begin{problem}  \label{prob: OSMS convex}
Convexified Problem~\ref{prob: OSMS Noncovex}
\begin{subequations}
\begin{align}
\min_{\delta\mathbf{x}_k,\delta\bar{\boldsymbol{u}}_k,\delta K_k}
\quad
& \widehat{J} \\
\st \quad
&\mathrm{Eq.~\eqref{eq: initial cubature point}} \\
&\mathrm{Eq.~\eqref{eq: cubature point dynamics}} , \;\forall k \in \mathbb{Z}_{0:N-2}\\
&\mathrm{Eqs.~\eqref{eq: convex, mean},\eqref{eq: Pk_sqrt},\eqref{eq: convex, HO moments}}, \;\forall k \in \mathbb{Z}_{0:N-1}
\end{align}
\end{subequations}
\end{problem}
For concision, the integration of Problem~\ref{prob: OSMS convex} in the \texttt{SCvx*} framework is omitted, but demonstrations of it can be found in \cite{Qi-Stat-Moment-Steering} as well as GitHub code for Section~\ref{sec: results} in the Appendix. Iteratively solving Problem~\ref{prob: OSMS convex} with the augmented Lagrangian modifications from \texttt{SCvx*} results in the solution for Problem~\ref{prob: OSMS Noncovex}. Let $\mathcal{S}$ be the set of \texttt{SCvx*} parameters. Then, a pseudocode of SMS in \texttt{SCvx*} is found in Algorithm~\ref{algo: SMS}.
\begin{algorithm}
\caption{Statistical Moment Steering via $\mathrm{SCvx}^{*}$}
\label{algo: SMS}
\begin{algorithmic}[1]

\State \textbf{Input:}
$\mathbf{x}_0$, $\{\mathbf{x}_k,\bar{\boldsymbol{u}}^{*}_k,K^{*}_k\}$, $\mathcal{S}$.
\State \textbf{Output:}
$\bar{\boldsymbol{u}}_k$,
$\boldsymbol{K}_k$, and
$\mathbf{x}_k$.
\State Set $\ell=0$ and initialize $\mathrm{SCvx}^{*}$ parameters.

\While{
convergence criteria not met
}
    \State Set $\ell\gets\ell+1$.
    \State Obtain linearization matrices.
    \State Solve Problem~\ref{prob: OSMS convex} with $\mathrm{SCvx}^{*}$ modifications for candidate solution $\{\mathbf{x}_k,\bar{\boldsymbol{u}}_k,{K}_k\}^{+}$.
    \State Compute step acceptance criterion.
    \If{step acceptance criterion met}
        \State Accept solution $\{\mathbf{x}_k,\bar{\boldsymbol{u}}^{*}_k,{K}^{*}_k\}
        \gets
        \{\mathbf{x}_k,\bar{\boldsymbol{u}}_k,{K}_k\}^{+}.
        $
        \If{Lagrange multiplier update condition met}
            \State Update the Lagrange multipliers.
            \State Update the penalty weight and stationarity.
        \EndIf
    \EndIf
    \State Update the trust-region radius.
\EndWhile

\State \textbf{Return:}
$\bar{\boldsymbol{u}}_k$,
${K}_k$, and
$\mathbf{x}_k$.

\end{algorithmic}
\end{algorithm}

\section{Numerical Results}\label{sec: results}
The numerical example provided is a coupled, nonlinear, second-order ordinary differential equation describing a two-dimensional oscillator:
\begin{equation}\label{eq: eom}
    \begin{aligned}
\ddot{x} + 0.25x + 0.75 x^3 + 0.5 (x-y) &= 0 \\
\ddot{y} + 0.25y + 0.75 y^3 + 0.5 (y-x) &= 0
    \end{aligned}
\end{equation}
The state is $[x,y,\dot{x},\dot{y}]^\top$ with an impulsive control mapping $B = [0_{2\times 2}\;\; I_2]^\top$. Eq.~\eqref{eq: eom} generates $\varphi^{0}_{\Delta t_k}$ from Eq.~\eqref{eq: impulsive flow}. Table~\ref{tab: sim parameter} lists the simulation parameters and constraints used in this numerical example. The optimization nodes are evenly spaced in time, yielding a time step of $\Delta t_k = 1.1076$ between nodes. For all examples, the terminal mean is constrained to $\boldsymbol{\mu}_{N-1} =\boldsymbol{\mu}_f$ and the terminal positional covariance is in an inequality constraint $\left\|{H_r P_{N-1}^{1/2}}\right\|_2\leq\sqrt{\lambda_{\max}(P_f)}$ where $H_r = [I_2 \;\; 0_{2\times 2}]$. In the additional skewness-constrained example, the terminal skewness is constrained to be between $-\gamma_f\leq H_r \boldsymbol{\gamma}_{N-1} \leq \gamma_f$ along the positional axes.
\begin{table}[H]
\caption{Simulation Parameters and Constraints}
\label{tab: sim parameter}

\renewcommand{\arraystretch}{1.5} 
\setlength{\tabcolsep}{2pt}       

\centering
\begin{tabular}{|c|c|c|}
\hline
Parameter & Variable &Value \\
\hline
\hline
Optimization Nodes &$N$ & $15$
\\
\hline
Time Horizon &$[t_0,t_f]$ & $[0,15.5059]$
\\
\hline
Initial Mean & $\boldsymbol{\mu}_0$ & $[2,0,0,1.2580]^\top$
\\
\hline
Initial Covariance & $P_0$ & $\mathrm{diag}([2.5,2.5,2.5,2.5]\cdot 10^{-3})$
\\
\hline
Final Mean Constraint & $\boldsymbol{\mu}_f$ & $[-1.9976,0,0,-1.2722]^\top$
\\
\hline
Final $3\sigma$ Constraint & $3\sqrt{\lambda_{\max}(P_f)}$ & $0.25$
\\
\hline
Final Skewness Constraint & $\gamma_f$ & $0.1$
\\
\hline
\end{tabular}
\end{table}
The initial distribution is Gaussian $\boldsymbol{X}_0 \sim \mathcal{N}(\boldsymbol{\mu}_0, P_0)$, and the $4$-th order conjugate unscented transform \cite{Adurthi-CUT} is used as the choice for the cubature rule. This results in 24 points for the 4-dimensional state. Notably, this cubature rule satisfies Assumption~\ref{assum: cubature points assumption}.

\begin{remark}\label{remark: mean in deterministic}
Under uncontrolled dynamics, the value of the initial mean propagates to the value of the terminal mean. See Figure~\ref{fig: ref traj}.
\end{remark}

\begin{figure}[!tbh]
\centering\includegraphics[width=0.45\textwidth]{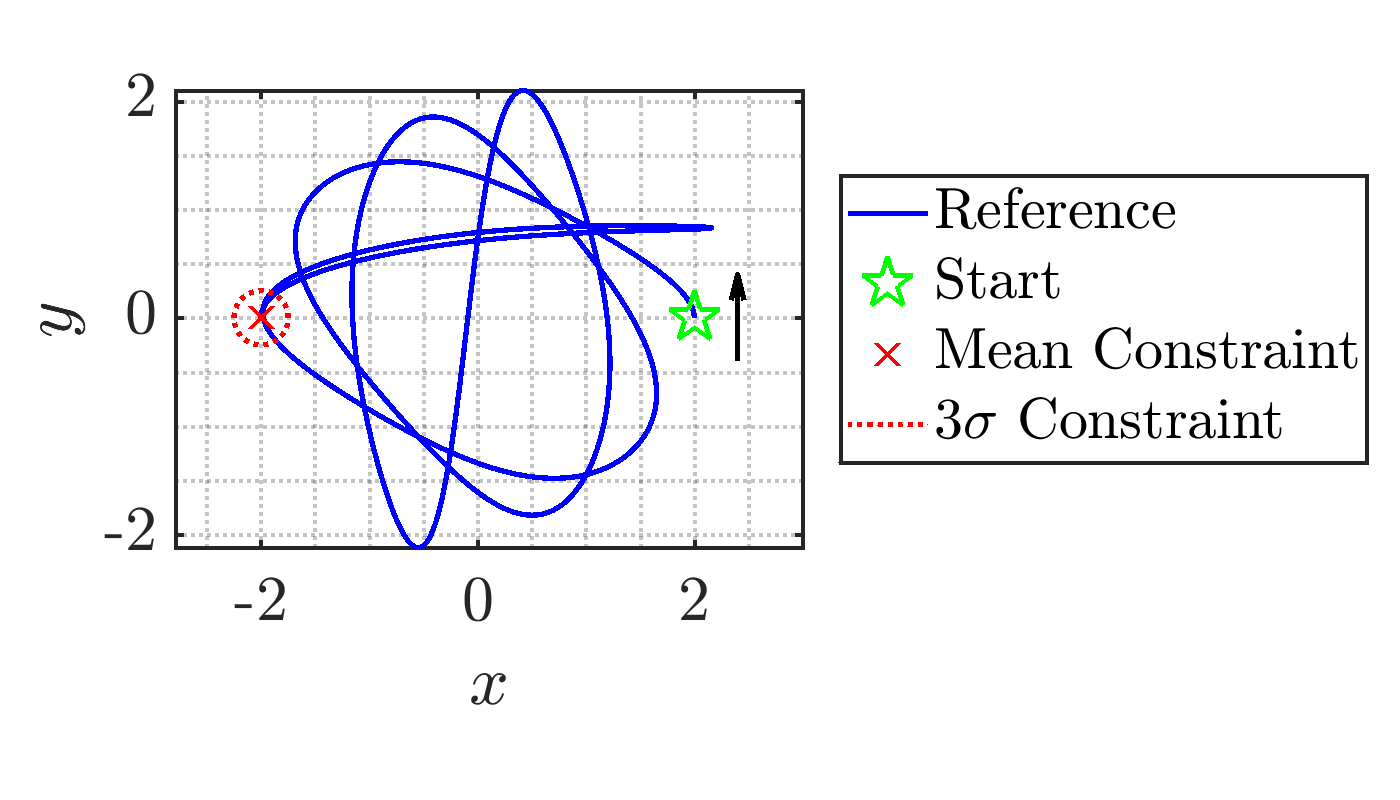}
	\caption{Reference Trajectory.}
	\label{fig: ref traj}
\end{figure}

\subsection{Linear Covariance Steering as Baseline}\label{sec: example, CS}
CS is applied to this example as a baseline comparison. The CS algorithm used is from \cite{Liu-CS} and utilizes a quadratic cost equivalent to Eq.~\eqref{eq: quadratic fuel}. For the CS comparison, the dynamics are linearized about the uncontrolled mean trajectory. Because this trajectory already satisfies the terminal mean constraint (Remark~\ref{remark: mean in deterministic}), no feedforward term is required for linearized CS. The covariance constraints are imposed in the CS and SMS examples. However, from Figure~\ref{fig: final dist CS}, it can be seen that CS is unable to effectively control this distribution for this nonlinear system. Both the final mean and covariance constraints are violated, as the linearized approximations of these moments are inaccurate for this example. Furthermore, it can be seen from the skewness value that this distribution is highly non-Gaussian. 

\begin{figure}[!tbh]
\centering\includegraphics[width=0.4\textwidth]{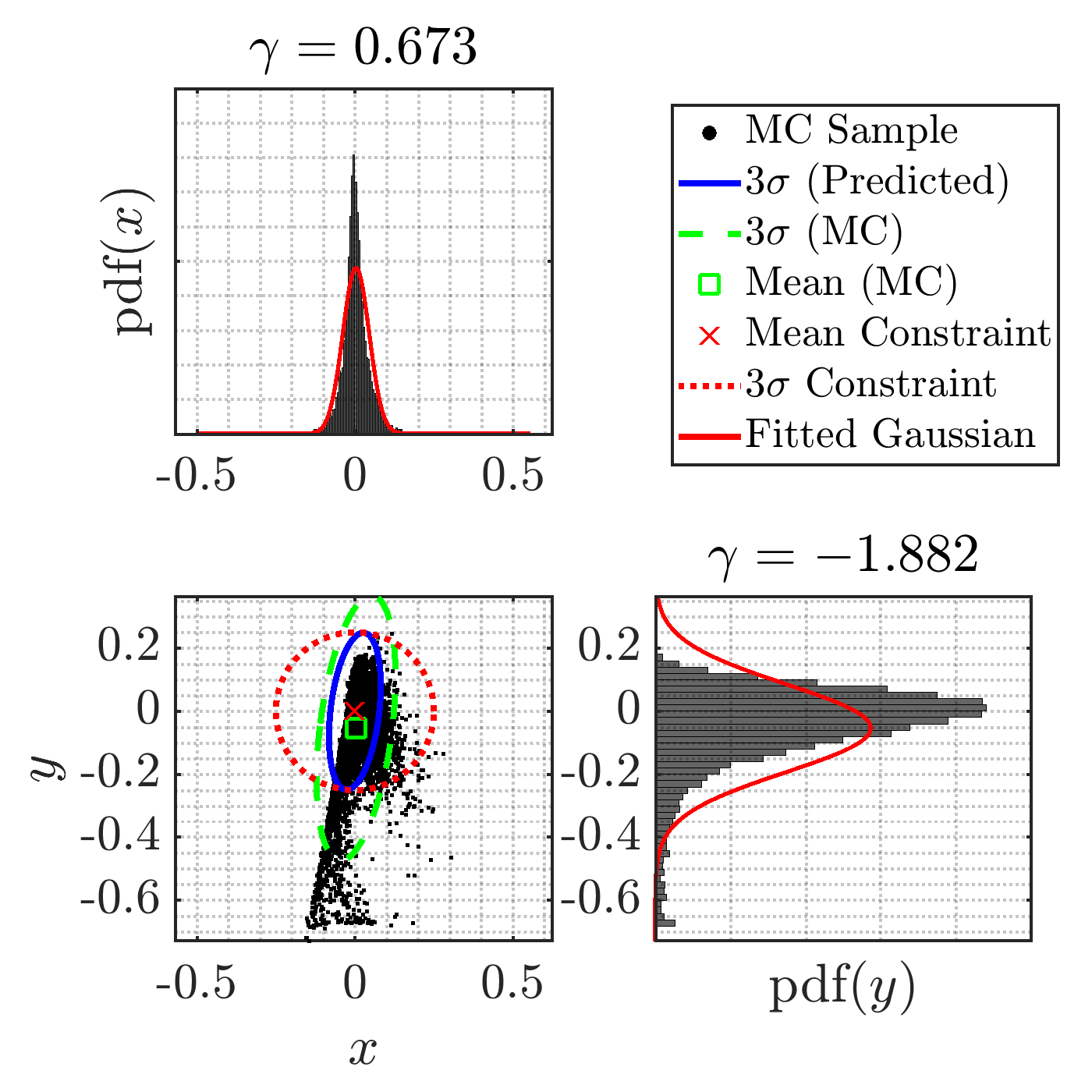}
	\caption{Monte Carlo ($n_{\text{MC}} = 10{,}000$): Joint and marginal distribution of position at final time using linear covariance steering from Liu et al. \cite{Liu-CS}. \emph{Axes origin at predicted mean.}}
	\label{fig: final dist CS}
\end{figure}

\subsection{SMS with Quadratic Cost}\label{sec: example, SMS quad}
\begin{figure}[!t]
\centering\includegraphics[width=0.4\textwidth]{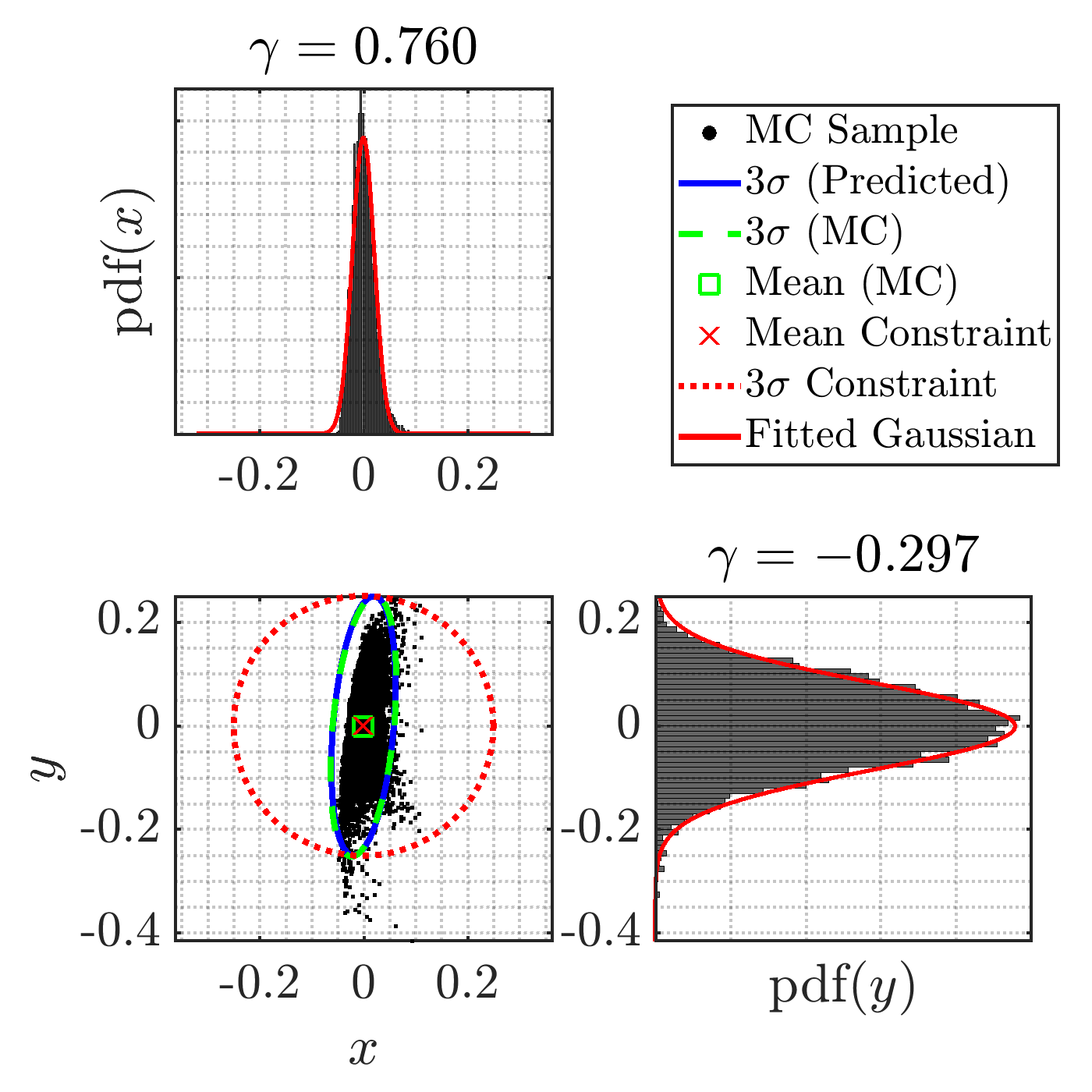}
	\caption{Monte Carlo ($n_{\text{MC}} = 10{,}000$): Joint and marginal distribution of position at final time. \emph{Axes origin at predicted mean.}}
	\label{fig: final dist}
\end{figure}

SMS is first solved with a quadractic objective function (Eq.~\eqref{eq: quadratic fuel}) as a direct comparison with the CS baseline in Section~\ref{sec: example, CS}. The initial reference is calculated using the same method as in \cite{Qi-Stat-Moment-Steering}. The final distribution under statistical moment steering is shown in Figure~\ref{fig: final dist}. Unlike the baseline CS case, the control policy obtained through SMS ensures satisfaction of both the final mean and $3\sigma$ constraint and is observed to have nonzero feedforward control. This is in contrast to Remark~\ref{remark: mean in deterministic} because the evolution of a distribution’s mean cannot, in general, be obtained by simply propagating it through a nonlinear function. As a result, feedforward control is required to properly steer the mean. 

\subsection{Control of Higher-order Moments}\label{sec: example, SMS skew}
\begin{table}[H]
\caption{Terminal Skewness Along Marginal Axes from Monte Carlo ($n_{\text{MC}} = 10{,}000$)}
\label{tab: terminal-skewness}
\renewcommand{\arraystretch}{1.5}
\setlength{\tabcolsep}{4pt}
\centering
\begin{tabular}{|c|c|c|}
\hline
SMS Case & $x$ & $y$ \\
\hline
\hline
Quadratic & 0.760 & -0.297 \\
\hline
Quadratic with Skewness Constraint & -0.127 & -0.033 \\
\hline
\end{tabular}
\end{table}

This section's numerical example is similar to Section~\ref{sec: example, SMS quad} but with an additional skewness constraint to demonstrate Corollary~\ref{col: nonlinear dist control}. Similar to Figure~\ref{fig: final dist}, the resulting terminal distribution for this case satisfies the prescribed mean and covariance constraints, but its plot is omitted for brevity. Importantly, Table~\ref{tab: terminal-skewness} summarizes the effects of the skewness constraint. As shown in the table, imposing the skewness constraint substantially reduced terminal skewness in the nonlinear Monte Carlo simulations. However, the simulated skewness did not fully satisfy the constraint, despite its satisfaction under the cubature prediction. This discrepancy reflects the accuracy of the uncertainty quantification, on which numerical stochastic control and filtering methods generally depend, and could be reduced by using a higher-order cubature rule.

\subsection{Comparison With Different Risk Measures}
\begin{figure}[!tb]
\centering\includegraphics[width=0.48\textwidth]{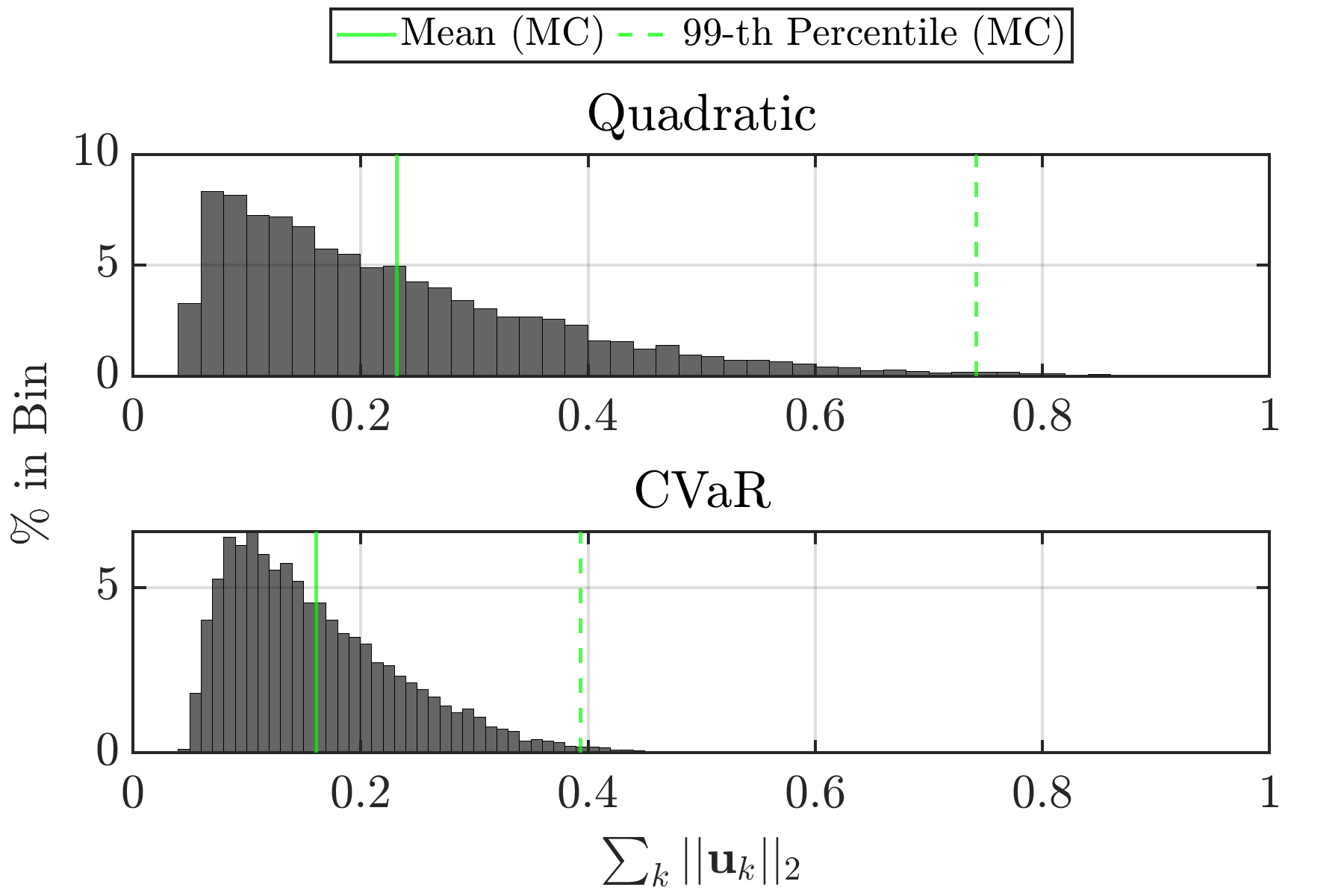}
	\caption{Monte Carlo ($n_{\text{MC}} = 10{,}000$): Distribution of control-norm cost.}
	\label{fig: control dist}
\end{figure}

This section compares Section~\ref{sec: example, SMS quad}'s results with a solution found with a CVaR $\alpha = 0.99$ objective function (Eq.~\eqref{eq: CVar fuel}). The resulting final distribution for this case is visually similar to Figure~\ref{fig: final dist} and is omitted for brevity. The histogram of the control cost is shown in Figure~\ref{fig: control dist}. The figure demonstrates CVaR's tail-flattening properties, as the distribution of quadratic control cost has a much heavier tail. The tighter upper tail under the CVaR objective indicates the reduction of worst-case control costs relative to a conventional quadratic objective. This result also demonstrates the versatility of cubature-based SMS in accommodating risk measures for non-Gaussian distribution steering.

\subsection{Computational Efficiency}
\begin{figure}[!t]
\centering\includegraphics[width=0.48\textwidth]{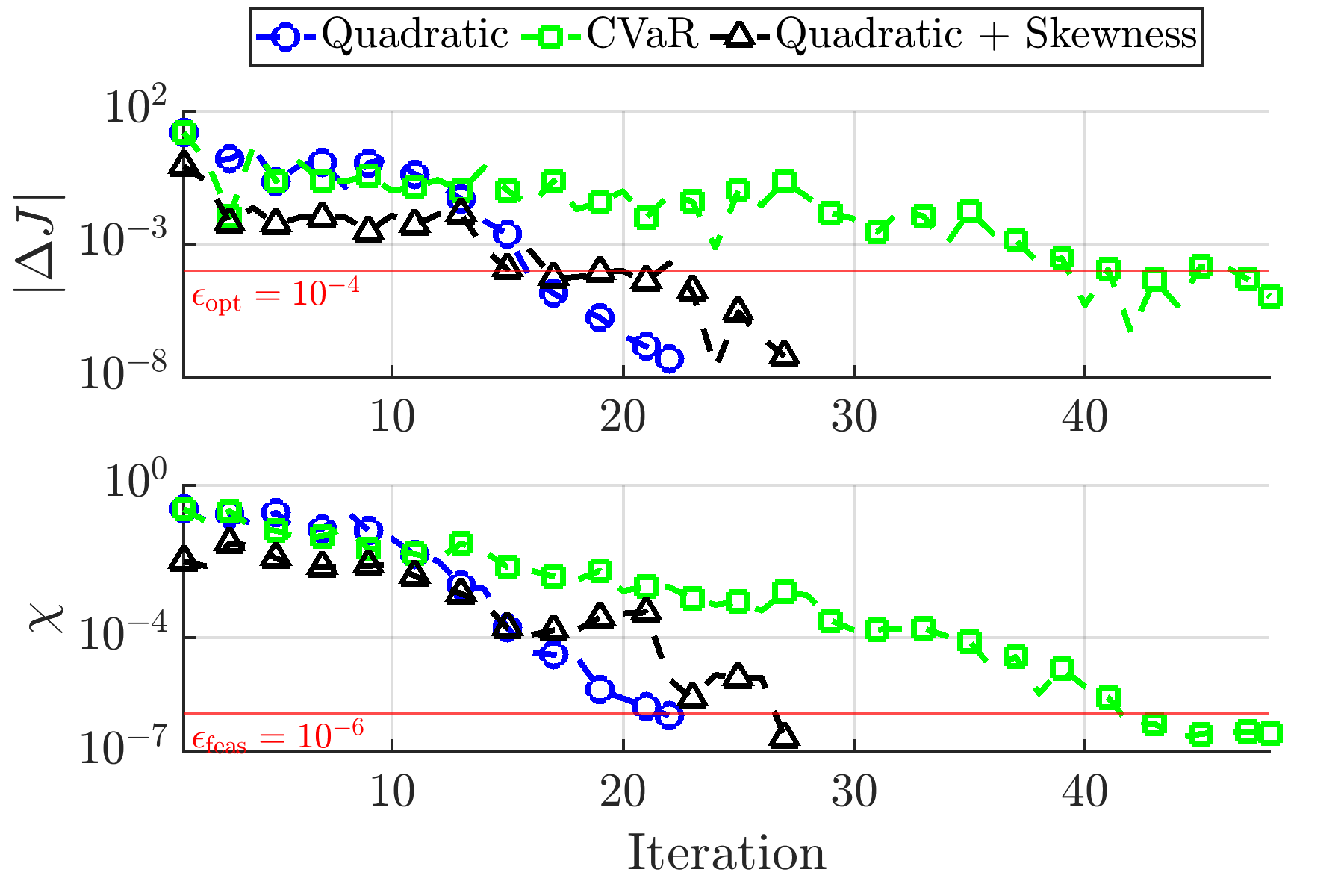}
	\caption{Convergence profile of \texttt{SCvx*} for SMS cases.}
	\label{fig: scp convergence}
\end{figure}

Figure~\ref{fig: scp convergence} shows the convergence profile for all cases. As shown, each case satisfies the tolerances for optimality and feasibility $(\epsilon_{\text{opt}},\epsilon_{\text{feas}})$. The convex subproblems are solved via \texttt{CVX} with \texttt{MOSEK} in \texttt{MATLAB} R2024b, and each iteration of \texttt{SCvx*} takes around 10-15 seconds.


\section{Conclusions}\label{sec: conclusion}
This paper introduces a cubature-based framework for steering non-Gaussian distributions in discrete-time deterministic nonlinear systems subject to statistical moment constraints. Termed statistical moment steering, this paper connects its cubature-point propagation to the Liouville equation, provides insights into moment reachability, and presents a tractable formulation solved through sequential convex programming. Numerical results show that cubature-based statistical moment steering improves non-Gaussian steering accuracy over linearized covariance steering in nonlinear systems, controls higher-order moments, and accommodates various risk measures.

\section{Appendix}
The GitHub link is \url{https://github.com/qi85/statistical_moment_steering}.



\bibliographystyle{IEEEtran}
\bibliography{references}

\end{document}